\newtheorem{definition}{Definition}[section]
\newtheorem{theorem}[definition]{Theorem}
\newtheorem{lemma}[definition]{Lemma}
\newtheorem{proposition}[definition]{Proposition}
\newtheorem{remark}[definition]{Remark}
\newcommand{\nc}{\newcommand}
\nc{\qed}{\mbox{}\nolinebreak\hfill \rule{2mm}{2mm}} 
\nc{\weak}{\rightharpoonup}
\nc{\weakstar}{\stackrel{\ast}{\rightharpoonup}} 
\nc{\proof}{{\bf Proof: }} 
\renewcommand{\div}{{{\mathrm{div}}_x}\,}
\nc{\modular}[1]{{\stackrel{ #1}{\longrightarrow\,}}}
\def\vec#1{\boldsymbol{#1}}
\newcommand{\ue}		{\vec{u}_\varepsilon}
\newcommand{\n}		{\vec{n}}
\newcommand{\dx}		{\,{\rm d}x}
\newcommand{\dt}		{\, {\rm d}t}
\newcommand{\dtau}		{\,{\rm d}\tau}
\newcommand{\dxdt}	{\, {\rm d}x{\rm d}t}
\newcommand{\mc}{\mathcal}
\newcommand{\veps}{\varepsilon}
\newcommand{\vtheta}{\vartheta}
\newcommand{\what}{\widehat}
\newcommand{\vphi}{\varphi}
\newcommand{\oline}{\overline}
\newcommand{\R}{\mathbb{R}}
\newcommand{\N}{\mathbb{N}}
\newcommand{\Z}{\mathbb{Z}}
\newcommand{\T}{\mathbb{T}^1}
\newcommand{\TT}{\mathbb{T}}
\renewcommand{\div}{{\rm div}\,}
\newcommand{\Supp}{{\rm supp}\,}
\def\d{\partial}
\def\div{{\rm div}\,}
\title{\LARGE \bf{Fast rotation and inviscid limits for the SQG equation\\
 with general ill-prepared initial data
%Navier-Stokes-Fourier system with Coriolis force
}}
\author{ \textsl{Leonardo Kosloff}$\,^1\;$, $\;$\textsl{Gabriele Sbaiz}$\,^{2}\;$ \vspace{.2cm} \\
\footnotesize{$\,^1\;$ \textsc{Universidade Estadual de Campinas}, \textit{IMECC-Departamento de Matem\'atica,}} \\ {\footnotesize Rua S\'ergio Buarque de Holanda, CEP 13083-859, Campinas, SP, Brazil}
\vspace{0.1cm} \\
\footnotesize{$\,^2\;$ \textsc{Universit\`a degli Studi di Trieste},} \\
\footnotesize{\textit{Dipartimento di Scienze Economiche, Aziendali, Matematiche e Statistiche “Bruno de Finetti”},} \\
\footnotesize{Via Valerio 4/1, 34127 Trieste, Italy} \vspace{0.1cm} \\
%
%\footnotesize{$\,^3\;$ \textsc{Univ. Lyon, Universit\'e Claude Bernard Lyon 1}, CNRS UMR 5208, \textit{Institut Camille Jordan},} \\
%\footnotesize{43 blvd. du 11 novembre 1918, F-69622 Villeurbanne cedex, France} \vspace{0.2cm} \\
%
%\small{Email addresses:} \\
\footnotesize{\ttfamily{kosloff@ime.unicamp.br}$\,,\quad$ \ttfamily{gabriele.sbaiz@phd.units.it}} \vspace{.1cm}
}
\date{\small \today}
\begin{document}
\maketitle
%%%%%%%%%%%%%%%%%%%%%%%%%

\abstract{In the present paper, we study the fast rotation and inviscid limits for the 2-D dissipative surface quasi-geostrophic equation with a dispersive forcing term $A \mc R_1\vtheta$, in the domain $\Omega =\T \times \R$.
In the case when we perform the fast rotation limit (keeping the viscosity fixed), in the context of general ill-prepared initial data, we prove that the limit dynamics is described by a linear equation. On the other hand, performing the combined fast rotation and inviscid limits, we show that the initial data $\oline \vartheta_0$ is transported along the motion. 
The proof of the convergence is based on an application of the Aubin-Lions lemma. 

%The proof of the convergence is based on the special form that the test functions.
}

\paragraph*{\small 2020 Mathematics Subject Classification:}{\footnotesize 35Q86 % PDEs / Equations of mathematical physics and other areas of application / PDEs in connection with geophysics
(primary);
35B25, % PDEs / Qualitative properties / Singular perturbations
76U60, % Fluid Mechanics / Rotating fluids / Rotating fluids
35B40, % PDEs / Qualitative properties / Asymptotic behavior of solutions
76M45 % Fluid Mechanics / Basic methods in Fluid Mechanics / Asymptotic methods, singular perturbations
(secondary).}

\paragraph*{\small Keywords:} {\footnotesize SQG equation; fast rotation limit; inviscid limit; singular perturbation problem; ill-prepared data%; compensated compactness
.}

\section{Introduction}

In this paper, we are interested in the description of the surface temperature $\vtheta$ on the ocean (see e.g. \cite{C-R} and \cite{Val}). We consider the two-dimensional surface quasi-geostrophic (SQG) equation with dissipation determined
by a fractional Laplacian and a dispersive forcing term in the domain $\Omega =\T \times \R$ given by:

\begin{equation} \label{eq_i:SQG}
\begin{cases}
	\partial_t \vtheta + \div(\vtheta \vec u)+ \nu \Lambda \vtheta+A\mc R_1 \vtheta=0\  \\[2ex]
\vec u=\mc R^\perp \vtheta:=(-\mc R_2 \vtheta,\mc R_1 \vtheta)  \\[2ex]
\vtheta_{|t=0}=\vtheta_0 \,,
\end{cases}
\end{equation}
where $\vtheta$ is a real-valued scalar function, $\vec u$ is the
divergence-free velocity field and $A$ represents the amplitude parameter for the dispersive forcing term. To define the fractional Laplacian operator $\Lambda := \sqrt{-\Delta}$ we can adapt the Fourier transform for functions $f(x_1, x_2) \in L^1(\Omega)$ with $(n_1, \xi_2) \in \mathbb{Z} \times \R$, so that 
\[
\hat{f}(n_1, \xi_2) = \int_{\mathbb{Z} \times \R} e^{-i(n_1 x_1 + \xi_2 x_2)} f(x_1, x_2)\, dx_1\,dx_2\,,
\]
and $\Lambda$ is then defined by:
\begin{equation*}
\widehat{\Lambda f}(n_1, \xi_2)=(n_1^2 +\xi_2^2)^{1/2}\, \widehat{f}(n_1, \xi_2):=|\tilde{\xi}|\widehat{f}(n_1, \xi_2)\, .
\end{equation*}
Moreover, we have $\nu > 0$ and $\mc R_i := \d_i \Lambda^{-1}$, for
$i = 1, 2$, are the usual Riesz transforms. 
The non-local dissipative term $\Lambda \vtheta$ comes from the Ekman pumping mechanism (we refer to \cite{H-P-G-S}, \cite{L} and \cite{Ped} for details); while the presence of an environmental horizontal gradient $\mc R_1 \vtheta = \d_1 \Lambda^{-1}\vtheta$ represents the advection of a large-scale buoyancy coming from the meridional variation of the Coriolis force. The parameter $A > 0$ is the analogue of the Rossby number which determines the typically large weight of the Coriolis term in geophysical fluid-dynamics. 
% The dispersive SQG equation is a particularly interesting model %for the interactions between wave-like (dispersive) and turbulent % phenomena [5]. 
The mathematical analysis of \eqref{eq_i:SQG} started with the work \cite{Ki-Na} by Kiselev and Nazarov, where the existence of smooth solutions in the torus was addressed.

In the case without the presence of the dispersive forcing term, the non-dissipative SQG equations ($\nu = 0$) are the two-dimensional analogue of the 3D Euler equations in vorticity form (see for e.g. \cite{C-M-T}), while the SQG equations are analogous to the 3D Navier-Stokes system; due to this analogy the global regularity of the SQG equations has been intensively studied in recent decades (we refer, instead, to \cite{B-M-N_EMF}, \cite{B-M-N_AA} and \cite{B-M-N_IUM} for the pioneering studies for the Navier-Stokes equations). For an overview about these equations, we refer to \cite{C-V-T} and references therein. However, the presence of the dispersive forcing term makes equations \eqref{eq_i:SQG} analogous to the Navier-Stokes-Coriolis (NSC) system, which is a fundamental geophysical model dealing with large-scale phenomena (we refer to \cite{C-D-G-G} for a more detailed discussion). The main advantage of the NSC system is that in the limit of vanishing Rossby number, the fast rotating term ``produces'' a stabilization effect which ensures the global well-posedness of strong solutions with large initial data, unlike the case of the Navier-Stokes equations (see also \cite{C-M-X} in this respect). In particular, in \cite{C-D-G-G} this was proved by establishing Strichartz estimates, which show how the dispersive phenomena weaken the non-linearity and stabilize NSC towards a 2D Navier-Stokes type system. Following this analogy, a similar result for the supercritical dispersive SQG equation was shown in \cite{C-M-X}, where in \eqref{eq_i:SQG} the dissipative term is represented by $\nu \Lambda^{2\alpha} \vtheta$, $\alpha <1/2$. In this context, we mention also the work \cite{Kos-N-P} (in the context of well-prepared initial data), where the main tool employed is the relative energy inequality, in order to treat the inviscid incompressible limit of the NSC system with large rotation (see \cite{F-J-N} and \cite{C-N}). Furthermore, the method used in \cite{Kos-N-P} allows to obtain the inviscid limit with fixed or no dispersion, providing an alternative to previous results \cite{Ber} and \cite{Wu} for the unforced SQG equation.

Motivated by the previous discussion, our goal is to perform the fast rotation and inviscid limits in the more general framework of \textit{ill-prepared} initial data, where strong convergence of the initial data for the limiting and target systems, or \textit{a priori} structural conditions for the dispersive estimates, are \textit{not} required. In this direction, we remark that, in the domain $\Omega =\T \times \R$, the stability of the SQG system without dispersive forcing and with horizontal dissipation, was recently proved in \cite{PW}. Thus, this result shows that periodic boundary conditions allow for a decomposition into the mean flow and its oscillations, in which global existence, and indeed, stability, can be obtained in the Sobolev space $H^2(\Omega)$, without explicit recourse to dispersive effects.

In our work, first of all, we study the regime when the rotational effects are predominant in the dynamics, keeping fixed the viscous coefficient $\nu>0$, i.e. 
\begin{equation}\label{scaling_Rossby}
A=\frac{1}{\veps}\, ,
\end{equation} 
for a given $\veps \in ]0,1]$. Next, we analyse the combined fast rotation and inviscid limits where 
\begin{equation}\label{scaling_combined}
A=\frac{1}{\veps}\quad \quad \text{and}\quad \quad \nu=\nu(\veps)=\veps^{\alpha}\,,\quad \alpha > 0.
\end{equation}
The scaling for $\nu$ in \eqref{scaling_combined} is motivated by the physical Stommel boundary layer model for the western intensification of oceanic currents, where the Ekman pumping dissipation must be taken into account. In the inviscid limit case, this scaling would typically be written with $-1 < \alpha < -2/3$, but relabeling the parameter $\varepsilon$ leads to \eqref{scaling_combined}. Note also that we disregard the frictional sublayer needed to adjust the no-slip boundary conditions (for more details, we refer to \cite{Desj-Gre} and \cite{GV-Paul}).

%dictated by the physical relevance (see \cite{Bra} for more physical insights in this respect).

In order to prove our results and get the improvement to the more general ill-prepared data, on the one hand, we take advantage of the special form of test functions, belonging to the kernel of the singular perturbation operator; on the other hand, we employ the structure of the system to find compactness properties for the means of temperature $\langle \vtheta_\veps \rangle$. The special form of the test functions combined %with the preservation of the means $\langle \vtheta_\veps \rangle$ along the motion
with the \textit{Aubin-Lions lemma} will allow us to pass to the limit in the equations. Moreover, we need to work with periodic boundary conditions in the domain $\Omega$, so that non-trivial test functions are still allowed by the compact support condition (see remark \ref{periodic-need} after Theorem \ref{nolimitdispthm}).

The strategy employed is a standard matter in the context of singular perturbations problems and it consists in the following steps:
\begin{itemize}
\item[(i)] develop an existence theory, which holds for any value of $\veps>0$ fixed;
\item[(ii)] state uniform bounds for the family of solutions in order to extract weak limit points;
\item[(iii)] find the constraints that the limit points have to satisfy;
\item[(iv)] pass to the limit for test functions in the kernel of the singular perturbation operator.
\end{itemize}

To conclude this part, let us mention that, due to the stabilization effects of the Coriolis force, we expect convergence towards a linear equation in the case of scaling \eqref{scaling_Rossby}, while taking the combined scaling \eqref{scaling_combined} we will show that the limit dynamics ``collapses''. This will tell us that the initial limit profile is transported along the motion.

\medbreak
Let us now give an overview of the paper.
In Section \ref{s:result} we collect our assumptions and we state our main results. In Section \ref{s:sing-pert} we study the singular perturbation part of the equations,
recalling the uniform bounds on our family of weak solutions and establishing constraints that the limit points have to satisfy. Section \ref{s:proof} is devoted to the proof of the convergence
results for the fast rotation limit and the combined fast rotation and inviscid limits. 
%In the last Section \ref{s:fast-inviscid_limit}, we prove the convergence result performing the combined fast rotation and inviscid limits.

\paragraph*{Some notation and conventions.} \label{ss:notations}

Let $B\subset\R^2$. 
%Throughout the whole paper, the symbol $\bbbone_B$ denotes the characteristic function of $B$.
The symbol $C_c^\infty (B)$ denotes the space of $\infty$-times continuously differentiable functions on $\R^2$ and having compact support in $B$. The dual space $\mc D^{\prime}(B)$ is the space of
distributions on $B$. % with values in $\R^n$.
Given $p\in[1,+\infty]$, by $L^p(B)$ we mean the classical space of Lebesgue measurable functions $g$, where $|g|^p$ is integrable over the set $B$ (with the usual modifications for the case $p=+\infty$).
We use also the notation $L_T^p(L^q)$ to indicate the space $L^p\big([0,T];L^q(B)\big)$, with $T>0$.
Given $k \geq 0$, we denote by $W^{k,p}(B)$ the Sobolev space of functions which belongs to $L^p(B)$ together with all their derivatives up to order $k$. When $p=2$, we alternately use the
notation $W^{k,2}(B)$ and  $H^k(B)$.
We denote by $\dot{W}^{k,p}(B)$ the corresponding homogeneous Sobolev spaces, i.e. $\dot{W}^{k,p}(B) = \{ g \in L^1_{\rm loc}(B)\, : \, D^\alpha g \in L^p(B),\ |\alpha| = k \}$.
Recall that $\dot{W}^{k,p}$ is the completion of $C^\infty_c(\overline{B})$ with respect to the $L^p$ norm of the $k$-th order derivatives.
%Here, the operator $D$ denotes the differentiation operator.
For the sake of simplicity, we will omit from the notation the set $B$, that we will explicitly point out if needed.
%We denote by $\langle \cdot ,  \cdot \rangle $ the duality pairing in suitable spaces, which will be specified in each case. 

We denote by $\T$ the one-dimensional flat torus $\T:=[-1,1] / \sim$, where $\sim$ denotes the equivalence relation which identifies $-1$ and $1$.

In the whole paper, the symbols $c$ and $C$ will denote generic multiplicative constants, which may change from line to line, and which do not depend on the small parameter $\veps$.
Sometimes, we will explicitly point out the quantities that these constants depend on, by putting them inside brackets.

Let $\big(f_\veps\big)_{0<\veps\leq1}$ be a sequence of functions in a normed space $X$. If this sequence is bounded in $X$,  we use the notation $\big(f_\veps\big)_{\veps} \subset X$.

%%%%%%%%%%%%%%%%%%%%%%%%%%%%%%%%%%%%%%%%%%
%%%%%%%%%%%%%%%%%%%%%%%%%%%%%%%%%%%%
\subsection*{Acknowledgements}
%%%%%%%%%%%%%%%%%%%%%%%%%%%%%%%%%%%%%
%%%%%%%%%%%%%%%%%%%%%%%%%%%%%%%%%%
{\small 

L. Kosloff was supported by FAPESP - Brazil grant 2019/16537-0. G. Sbaiz is member of the INdAM (Italian Institute for Advanced Mathematics) group.

Moreover, the authors acknowledge Francesco Fanelli for his careful remarks that improve a lot the presentation of this work.
}

\section{Setting of the SQG problem and main statements} \label{s:result}

In this section, we formulate our working hypotheses (see Subsection \ref{ss:FormProb}) and we state our main results
(in Subsection \ref{ss:results}).

\subsection{Formulation of the problem} \label{ss:FormProb}

In this subsection, we present the rescaled SQG system with the dispersive forcing term, which we are going to consider in our study, and we
formulate the main working hypotheses. %, and recall some basics of the theory of \emph{finite energy weak solutions} from \cite{F-N}.
The material of this part is mostly classical: unless otherwise specified, we refer to \cite{Kos-N-P} for details.

 \subsubsection{Primitive system}\label{sss:primsys}
To begin with, let us introduce the ``primitive system'', i.e. the rescaled SQG system,
supplemented with the scaling \eqref{scaling_Rossby} presented in the introduction, where $\veps\in\,]0,1]$ is a small parameter.
%with small Mach $Ma$, Rossby $Ro$ and Froude $Fr$ numbers.
%In particular, given a small parameter $\veps\in\,]0,1]$ which we will let go to $0$, we introduce the scaling
%$$
%Ma = \ep^m\,,\quad Fr = \ep^{m/2}\,,\quad Ro = \ep\,,
%$$
%for some $m\geq1$.
Thus, the system consists of the momentum equation and the quasi-geostrophic balance: respectively,
\begin{align}
&\partial_t \vtheta_\veps + \div( \vtheta_\veps \ue)+ \nu \Lambda \vtheta_\veps+\frac{1}{\veps}\mc R_1 \vtheta_\veps=0 \label{eq_mom} \\
&\ue=\mc R^\perp \vtheta_\veps:=(-\mc R_2 \vtheta_\veps,\mc R_1 \vtheta_\veps) \, .\label{eq_Q-G_bal}
\end{align}
The unknown is the fluid surface temperature $\vtheta_\veps=\vtheta_\veps (t,x)$, with $t \in \R_+$ and $x\in \Omega$. 
\begin{remark}
In the case of scaling \eqref{scaling_combined}, we replace $\nu$ with $\veps^{\alpha},\, \alpha > 0$. 
\end{remark}

%%%%%%%%%

\subsubsection{Initial data and finite energy weak solutions} \label{sss:data-weak}

We address the singular perturbation problem described in subsection \ref{sss:primsys}, with scaling \eqref{scaling_Rossby}, for general \emph{ill prepared initial data}, in the framework of \emph{finite energy weak solutions} (see e.g. \cite{C-M-X}).
Since we work with weak solutions based on dissipation estimates, we need to assume that the initial data satisfy the following bound:
\begin{equation}\label{bound_theta_0}
\sup_{\veps\in \, ]0,1]}\|\vtheta_{0,\veps}\|_{L^2(\Omega)}\leq C\, .
\end{equation}

Thanks to the previous uniform estimate, up to extraction, we can argue that
\begin{equation}
\oline \vtheta_0:= \lim_{\veps \rightarrow 0}\vtheta_{0,\veps}\; ,
\end{equation}
where we agree that the previous limit is taken in the weak topology of $L^2(\Omega)$.

\medbreak

%with the suitable extension of \cite{Poul} in order to treat %Lipschitz domains.

Let us specify better what we mean for \emph{finite energy weak solution} (see \cite{C-D-G-G} for details). 
\begin{definition}
We say that $\vtheta_\veps$ is a \textit{weak solution} to the dispersive SQG system \eqref{eq_mom}-\eqref{eq_Q-G_bal} in $[0,T[\, \times \Omega$ (for some time $T>0$) with the initial condition $\vtheta_{0,\veps}$, if:
\begin{itemize}
\item[(i)] $\vtheta_\veps \in L^\infty ([0,T[\, ;L^2(\Omega))\cap L^2([0,T[\, ;\dot{H}^{1/2}(\Omega))$;
\item[(ii)] the momentum equation is satisfied in a weak sense: for any $\varphi \in C_c^\infty ([0,T[\, \times \Omega)$, one has 
\begin{equation}\label{weak_mom}
-\int_0^T\int_{\Omega}\left(\vtheta_\veps \, \d_t\vphi + \vtheta_\veps \ue\cdot \nabla \vphi+\nu \Lambda^{1/2}\vtheta_\veps\,  \Lambda^{1/2}\vphi-\frac{1}{\veps}\mc R_1 \vtheta_\veps \, \vphi\right)\, \dx\dt=\int_{\Omega}\theta_{0,\veps}\, \vphi(0)\, \dx\, ;
\end{equation}
\item[(iii)] the quasi-geostrophic balance is satisfied in $\mc D^\prime (]0,T[\, \times \Omega)$;
\item[(iv)]the solution satisfies the following energy estimate:
\begin{equation}\label{energy_ineq}
\|\vtheta_\veps(T)\|^2_{L^2}+2\nu \int_0^T\|\Lambda^{1/2}\vtheta_\veps (\tau)\|^2_{L^2}\,\dtau \leq \|\vtheta_{0,\veps}\|^2_{L^2}\quad \text{ for all }\quad T>0\, .
\end{equation} 
\end{itemize} 
\end{definition}
The solution is \textit{global} if the previous conditions are satisfied for all $T>0$.	

%\begin{remark}\label{rmk:theta_inviscid}
%In the case when $\nu=\nu(\veps)=\veps^{\alpha},\, \alpha < -2/3$, in order to have uniform controls for the dissipative term $\nu \Lambda \vtheta_\veps$, we have to consider strong solutions with the following regularity properties:
%\begin{equation*}
%\vtheta_\veps \in L^\infty ([0,T[\, ;H^s(\R^2))\quad \text{and}\quad s>2\, .
%\cap L^2([0,T[\, ;\dot{H}^{3/2}(\R^2))\, .
%\end{equation*}
%This condition guarantees the preservation of the initial $H^s$ regularity (see e.g. Chapter 3 of \cite{F-N}).
%\end{remark}
%We always tacitly assume that all integrals in the definition of a weak solutions are well defined, i.e.  the quantities are at least integrable on $(0,T)\times \Omega_\veps$.

\medskip

Under the previous assumptions (collected in subsections \ref{sss:primsys} and here above), at any \emph{fixed} value of the parameter $\veps\in\,]0,1]$,
the existence of a global in time finite energy weak solution $\vtheta_\veps$ to system SQG, related to the initial data
$\vtheta_{0,\veps}$, has been proved in e.g. \cite{Res} (see also \cite{C-M-X} in this respect). 
%In particular, that solution satisfies the following energy estimate:
%\begin{equation}\label{energy_ineq}
%\|\vtheta_\veps(T)\|^2_{L^2}+2\nu \int_0^T\|\Lambda^{1/2}\vtheta_\veps (\tau)\|^2_{L^2}\,\dtau \leq \|\vtheta_{0,\veps}\|^2_{L^2}\quad \text{ for all }\quad T>0\, .
%\end{equation} 
\begin{remark}
The term involving the Riesz transform $\mc R_1$ does not contribute to the energy estimate \eqref{energy_ineq}, because for any $s\in \R$, it holds:
\begin{equation}
\langle \Lambda^s \mc R_1 \theta\,,\; \Lambda^s \theta \rangle_{L^2}= \int_{\Omega}\Lambda^s \mc R_1 \theta\, \oline{\Lambda^s \theta} \, \dx =-\int_{\Omega}i\n_1|\tilde{\xi}|^{s-1}\hat\theta\, \oline{|\tilde{\xi}|^s\hat\theta}\, {\rm d}\tilde{\xi} =-\langle \Lambda^s \theta\,,\; \Lambda^s \mc R_1 \theta \rangle_{L^2} \, . 
\end{equation}
\end{remark}

\medskip

%%%%%%%%%%%%%%%%%%%%%
\subsection{Main results}\label{ss:results}

\medbreak
We can now state our main results. The first statement concerns the case when the rotational and viscosity effects, with fixed $\nu$, are predominant in the dynamics. 
\begin{theorem}\label{thm:fast_lim}
For any fixed value of $\veps \in \, ]0,1]$, assume the initial data $\vtheta_{0,\veps}$ verifies the hypothesis in subsection \ref{sss:data-weak} and let $\vtheta_\veps$ be a corresponding weak solution to system \eqref{eq_mom}-\eqref{eq_Q-G_bal}. Then, one has the following convergence property, for any $T>0$:
\begin{equation}
\vtheta_\veps \weakstar \oline \vtheta \qquad \qquad \mbox{ weakly-$*$ in }\qquad \qquad L^\infty_T(L^2(\Omega))\cap L^2_T(\dot H^{1/2}(\Omega))\, .
\end{equation} 
In addition, $\oline \vtheta$ is a weak solution to the following linear equation in $\R_+\times \Omega$:
\begin{equation}\label{limit_mom}
\d_t \oline \vtheta+\nu \Lambda_2\oline \vtheta=0\, ,
\end{equation}
supplemented with the initial condition $\oline \vtheta_{|t=0}=\oline \vtheta_0$.
\end{theorem}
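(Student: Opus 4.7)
My plan is to implement the four-step singular-perturbation scheme outlined in the introduction, with the Aubin–Lions lemma applied to the $x_1$-means as the key compactness ingredient. First, the energy inequality \eqref{energy_ineq} and the initial-data bound \eqref{bound_theta_0} give $\big(\vtheta_\veps\big)_\veps \subset L^\infty_T(L^2(\Omega))\cap L^2_T(\dot H^{1/2}(\Omega))$; by Banach–Alaoglu we extract a weak-$\ast$ limit $\oline\vtheta$ along a subsequence. Multiplying \eqref{eq_mom} by $\veps$ and passing to the distributional limit yields the constraint $\mc R_1 \oline\vtheta = 0$, so that $\what{\oline\vtheta}$ is supported at $n_1 = 0$ and $\oline\vtheta = \oline\vtheta(t, x_2)$.

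To extract the limiting dynamics I would next study the $x_1$-average. Averaging \eqref{eq_mom} over $x_1$ and using that $\mc R_1\langle\vtheta_\veps\rangle = 0$ together with the $x_1$-periodicity, one obtains the closed equation
\begin{equation*}
\d_t\langle\vtheta_\veps\rangle + \nu\Lambda_2\langle\vtheta_\veps\rangle + \d_2\langle\vtheta_\veps\,\mc R_1\vtheta_\veps\rangle = 0.
\end{equation*}
With $\langle\vtheta_\veps\rangle$ uniformly bounded in $L^\infty_T(L^2)\cap L^2_T(\dot H^{1/2})$ and the right-hand side bounded in a suitable negative Sobolev space (the product $\vtheta_\veps\,\mc R_1\vtheta_\veps$ lies in $L^1_T(L^2(\Omega))$ by the 2D Sobolev embedding $\dot H^{1/2}\hookrightarrow L^4$ and the $L^p$-boundedness of Riesz transforms), the Aubin–Lions lemma gives the strong compactness $\langle\vtheta_\veps\rangle\to\oline\vtheta$ in $L^2_T(L^2_{\mathrm{loc}}(\R))$: this is the compactness \emph{on means} advertised in the introduction.

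The core obstacle is to show that the nonlinear cross-term vanishes in the limit, since both factors in $\vtheta_\veps\,\mc R_1\vtheta_\veps$ converge only weakly. I would handle it by substituting the equation into itself: from \eqref{eq_mom}, $\mc R_1\vtheta_\veps = -\veps\,\big[\d_t\vtheta_\veps + \nu\Lambda\vtheta_\veps + \div(\vtheta_\veps\ue)\big]$ (rigorously justified via a spatial mollification argument). Testing \eqref{weak_mom} against $\vphi(t,x_2)\in C_c^\infty([0,T[\,\times\R)$, which lies in the kernel of $\mc R_1$, the singular term vanishes identically by skew-adjointness; since $\nabla\vphi = (0,\d_2\vphi)$, the nonlinear contribution reduces to $\int\vtheta_\veps\,\mc R_1\vtheta_\veps\,\d_2\vphi\,\dxdt$, which after substitution splits into three $O(\veps)$ pieces. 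The $\d_t$-piece becomes $\tfrac{\veps}{2}\int\d_t(\vtheta_\veps^2)\,\d_2\vphi\,\dxdt$, controlled by integration by parts in time and the $L^\infty_T(L^2)$ bound; the $\Lambda$-piece is controlled via self-adjointness of $\Lambda^{1/2}$, the $L^2_T(\dot H^{1/2})$ bound, and a tame estimate on $\Lambda^{1/2}(\vtheta_\veps\,\d_2\vphi)$; the transport piece, after using $\div\ue = 0$ to rewrite it as $\tfrac{\veps}{2}\int\vtheta_\veps^2\,\ue\cdot\nabla\d_2\vphi\,\dxdt$, is controlled via the critical 2D embedding $\dot H^{1/2}(\Omega)\hookrightarrow L^4(\Omega)$ together with the $L^\infty_T(L^2)$ control on $\ue = \mc R^\perp\vtheta_\veps$.

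With the singular and nonlinear terms disposed of, the remaining linear terms in \eqref{weak_mom} pass to the limit via the weak-$\ast$ convergence of $\vtheta_\veps$ and the hypothesis $\vtheta_{0,\veps}\weak\oline\vtheta_0$ in $L^2(\Omega)$; on the $n_1=0$ mode the dissipative pairing reduces to $\Lambda_2^{1/2}\oline\vtheta\,\Lambda_2^{1/2}\vphi$, identifying $\oline\vtheta$ as the weak solution of \eqref{limit_mom} with initial datum $\oline\vtheta_0$. The main difficulty is precisely the nonlinear step above: the $\veps$-cancellation extracted by substituting the equation into itself is what makes the argument close for general ill-prepared initial data at the finite-energy level, without any preparation of the data or resort to dispersive Strichartz estimates.
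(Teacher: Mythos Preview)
Your proposal is correct, and in its overall architecture (uniform energy bounds $\Rightarrow$ weak-$\ast$ compactness; the $\veps$-rescaled equation $\Rightarrow$ the constraint $\d_1\oline\vtheta=0$; testing against $\psi=\psi(t,x_2)$ in the kernel of $\mc R_1$) it coincides with the paper's scheme. The genuine difference lies in how you dispose of the convective term $\int\vtheta_\veps(u_\veps)_2\,\d_2\psi=\int\vtheta_\veps\,\mc R_1\vtheta_\veps\,\d_2\psi$.

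The paper takes the $x_1$-mean of \eqref{eq_mom}, uses a Bony paraproduct bound to place $\vtheta_\veps\ue$ in $L^2_TB^{-1/2}_{2,1}\hookrightarrow L^2_TH^{-1/2}$, and thereby controls $\d_t\langle\vtheta_\veps\rangle$ in $L^2_TH^{-3/2}_{\rm loc}$; Aubin--Lions then yields $\langle\vtheta_\veps\rangle\to\oline\vtheta$ strongly in $L^2_TL^2_{\rm loc}$, and from this the paper passes to the limit in the averaged product $\langle\vtheta_\veps\ue\rangle$, which vanishes by the constraint $(\oline u)_2=\mc R_1\oline\vtheta=0$. Your route is instead to substitute the identity $\mc R_1\vtheta_\veps=-\veps\big(\d_t\vtheta_\veps+\nu\Lambda\vtheta_\veps+\div(\vtheta_\veps\ue)\big)$ back into the convective integral, producing three pieces that are each $O(\veps)$ thanks to the energy bounds, divergence-freeness of $\ue$, and the 2D embedding $H^{1/2}\hookrightarrow L^4$. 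This argument is more elementary---it avoids paradifferential calculus and, in fact, renders your own Aubin--Lions step superfluous, since the nonlinear term is shown to be small \emph{quantitatively} rather than via a strong--weak product. The price is the mollification needed to make the pairing $\int\vtheta_\veps\,\d_t\vtheta_\veps\,\d_2\psi$ rigorous at the weak-solution level; this is standard but should be carried out carefully. Either way, the limit equation \eqref{limit_mom} follows, since for $\psi$ and $\oline\vtheta$ supported on $\{n_1=0\}$ the operator $\Lambda$ reduces to $\Lambda_2$.
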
 

The previous theorem characterize the limit dynamics of system \eqref{eq_mom}-\eqref{eq_Q-G_bal} when one consider a fast rotation regime. In contrast with \cite{C-M-X}, in Theorem \ref{thm:fast_lim}, we are able to consider data which are \textit{ill-prepared}.

\medskip 

Instead, in the fast rotation and inviscid limits, we need more regularity on $\vtheta_\veps$ since we have to control $\Lambda \vtheta_\veps$. In this case, the limit dynamics is trivial and tell us that the limit temperature $\oline \vtheta$ is constant (in time).
\begin{theorem}\label{thm:inviscid dynamics}
For any fixed value of $\veps \in \, ]0,1]$, let the initial data $\vtheta_{0,\veps}\in H^s(\Omega)$ for $s>2$. Let $\vtheta_\veps$ be a corresponding solution to system \eqref{eq_mom}-\eqref{eq_Q-G_bal}. Then, one has the following convergence property, for $s>2$:
\begin{equation}
\vtheta_\veps \weakstar \oline \vtheta \qquad \qquad \mbox{ weakly-$*$ in }\qquad \qquad L^\infty_T(H^s(\Omega))\, .
%\cap L^2(\R_+;\dot H^{3/2}(\R^2))\, .
\end{equation} 
Moreover, one deduces the relation (in the weak sense) 
$$\d_t \oline \vtheta=0\, ,$$ 
with the initial condition $\oline \vtheta_{|t=0}=\oline \vtheta_0$.
\end{theorem}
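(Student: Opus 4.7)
The plan is to follow the four-step singular-perturbation strategy outlined in the introduction. For each fixed $\varepsilon$, standard $H^s$ energy estimates (with $s>2$) give a uniform bound: the Coriolis-type term $\frac{1}{\varepsilon}\mathcal R_1\vartheta_\varepsilon$ is skew-adjoint in every $H^k$ inner product (cf. the remark following \eqref{energy_ineq}) and thus contributes nothing, the viscous term dissipates nonnegatively, and the convective term is controlled by a Kato--Ponce/commutator estimate using the Sobolev embedding $H^s\hookrightarrow W^{1,\infty}$. Hence $(\vartheta_\varepsilon)_\varepsilon\subset L^\infty_T(H^s(\Omega))$, and up to a subsequence $\vartheta_\varepsilon\weakstar \bar\vartheta$ in the same space. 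Multiplying \eqref{eq_mom} by $\varepsilon$ and passing to the limit in $\mathcal D'$ yields the constraint $\mathcal R_1\bar\vartheta=0$, i.e.\ $\bar\vartheta$ depends only on $(t,x_2)$.

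\medskip

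To identify the limit equation I plug in test functions of the form $\varphi=\varphi(t,x_2)\in C_c^\infty([0,T[\times\mathbb R)$; by the periodic structure in $x_1$, these are admissible on $[0,T[\times\Omega$, and they lie in $\ker\mathcal R_1$, so by skew-adjointness the $\frac{1}{\varepsilon}\mathcal R_1$-term drops out of the weak formulation \eqref{weak_mom}. The time-derivative and initial-data terms pass to the limit by weak-$*$ convergence, and the viscous piece is trivially $O(\varepsilon^\alpha)\to 0$; what remains is the convective contribution $-\int_0^T\!\!\int_\Omega \vartheta_\varepsilon u_\varepsilon^2\,\partial_2\varphi\,\mathrm dx\,\mathrm dt = -\int_0^T\!\!\int_\Omega\vartheta_\varepsilon\,\mathcal R_1\vartheta_\varepsilon\,\partial_2\varphi\,\mathrm dx\,\mathrm dt$.

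\medskip

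The main obstacle is precisely this convective term: even though $\vartheta_\varepsilon\weakstar\bar\vartheta$ and $\mathcal R_1\vartheta_\varepsilon\weakstar\mathcal R_1\bar\vartheta=0$ both converge weakly, their product does not automatically vanish in the limit, and a direct Aubin--Lions argument on $\vartheta_\varepsilon$ is obstructed by the singular $1/\varepsilon$ prefactor appearing in $\partial_t\vartheta_\varepsilon$. The way out is to use the equation itself to trade one factor of $\mathcal R_1\vartheta_\varepsilon$ against $\varepsilon$: substituting
\begin{equation*}
\mathcal R_1\vartheta_\varepsilon=-\varepsilon\,\partial_t\vartheta_\varepsilon-\varepsilon\,\mathrm{div}(\vartheta_\varepsilon\vec u_\varepsilon)-\varepsilon^{1+\alpha}\Lambda\vartheta_\varepsilon,
\end{equation*}
the divergence and viscous pieces are manifestly $O(\varepsilon)$ by the $H^s$ bound, and the remaining term $\frac{\varepsilon}{2}\int\partial_t(\vartheta_\varepsilon^2)\,\partial_2\varphi$ becomes, after integration by parts in time,
\begin{equation*}
-\frac{\varepsilon}{2}\int_0^T\!\!\int_\Omega\vartheta_\varepsilon^2\,\partial_t\partial_2\varphi\,\mathrm dx\,\mathrm dt-\frac{\varepsilon}{2}\int_\Omega\vartheta_{0,\varepsilon}^2\,\partial_2\varphi(0)\,\mathrm dx,
\end{equation*}
which is also $O(\varepsilon)$ by the uniform $L^\infty_T(L^2)$ bound on $\vartheta_\varepsilon$. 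Hence the convective term vanishes in the limit, and the weak formulation collapses to $\int\bar\vartheta\,\partial_t\varphi\,\mathrm dx\,\mathrm dt = -\int\bar\vartheta_0\,\varphi(0)\,\mathrm dx$, which is precisely $\partial_t\bar\vartheta=0$ in $\mathcal D'$ with the prescribed initial condition.
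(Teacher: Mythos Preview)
Your argument is correct, and for the uniform $H^s$ bounds and the constraint $\partial_1\bar\vartheta=0$ it essentially coincides with the paper's approach (the paper spells out the $H^s$ estimate via Littlewood--Paley blocks and a continuation argument to exhibit a uniform lifespan $T^*$, but the content is the same Kato--Ponce/commutator bound you invoke).

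Where you genuinely diverge from the paper is in the treatment of the convective term. The paper does \emph{not} substitute the equation; instead it averages in $x_1$, observes that $\langle\mathcal R_1\vartheta_\varepsilon\rangle=0$ so that the equation for $\langle\vartheta_\varepsilon\rangle$ carries no $1/\varepsilon$, obtains a uniform bound on $\partial_t\langle\vartheta_\varepsilon\rangle$ in a negative Sobolev space, and then applies Aubin--Lions to get strong convergence $\langle\vartheta_\varepsilon\rangle\to\langle\bar\vartheta\rangle$ in $L^2_TL^2_{\rm loc}$. This yields $\langle\vartheta_\varepsilon\vec u_\varepsilon\rangle\to\langle\bar\vartheta\,\bar{\vec u}\rangle$, and the limit vanishes because $\bar u_2=\mathcal R_1\bar\vartheta=0$. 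Your substitution trick is more elementary: it bypasses Aubin--Lions entirely and even gives a quantitative $O(\varepsilon)$ rate for the convective contribution. On the other hand, the paper's compactness route is what is actually needed for Theorem~\ref{thm:fast_lim} (fixed $\nu$, only $L^\infty_TL^2\cap L^2_T\dot H^{1/2}$ regularity), where your product $\vartheta_\varepsilon\,\mathrm{div}(\vartheta_\varepsilon\vec u_\varepsilon)$ would not make sense; the paper then simply recycles the same argument for Theorem~\ref{thm:inviscid dynamics}. Your shortcut is specific to the inviscid case precisely because the extra $H^s$ regularity makes all the substituted terms classically integrable.
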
 

\begin{remark}
We point out that the condition $s>2$ is necessary to have the embedding of Sobolev spaces $H^s$ in the space $W^{1,\infty}$ of globally Lipschitz functions (see the Appendix \ref{app:LP} for more details in this respect). 
\end{remark}

%%%%%%%%%
\section{Inspection of the singular perturbation}\label{s:sing-pert}
The purpose of this section is twofold. First of all, in Subsection \ref{ss:unif-est} we recall the uniform bounds and further properties for our family of weak solutions. Then, we study the singular operator underlying to the primitive SQG equations, and determine constraints that the limit points of our family of weak solutions have to satisfy (see Subsection \ref{ss:ctl1}).

\subsection{Uniform bounds}\label{ss:unif-est}

In this section we will state the uniform bounds on the sequence $\bigl(\vtheta_\veps)_\veps$. 

Indeed, with the energy estimate \eqref{energy_ineq} at hand, we can derive uniform bounds for our family of weak solutions. To begin with, we point out that, owing to the assumption \eqref{bound_theta_0}, the right-hand side of \eqref{energy_ineq} is \textit{uniformly bounded} for all $\veps \in\, ]0,1]$. Then, one has
\begin{align}
\sup_{T\in \R_+}\|\vtheta_\veps (T)\|_{L^2(\Omega)}\leq c\label{unif_bound_theta}\\
\int_0^T\|\Lambda^{1/2}\vtheta_\veps (\tau)\|^2_{L^2(\Omega)}\, \dtau\leq c \quad \text{for all }T>0\, .\label{unif_bound_lam_theta}
\end{align}
Due to the relation \eqref{eq_Q-G_bal} and the fact that the Riesz operator is a $0$-th order operator, one gets also
\begin{equation}
\sup_{T\in \R_+}\|\ue (T)\|_{L^2}+\int_0^T\|\Lambda^{1/2}\ue (\tau)\|^2_{L^2}\, \dtau\leq c\, ,
\end{equation}
where the generic constant $c$ is independent of $\veps> 0$.

In view of the previous properties, there exist $\oline \vtheta$, $\oline{\vec u} \in L^\infty_T(L^2)\cap L^2_T(\dot H^{1/2})$ such that (up to the extraction of a suitable subsequence) one has
\begin{equation}
\vtheta_\veps \weakstar \oline \vtheta \qquad \qquad \text{ and }\qquad \qquad \ue \weakstar \oline{\vec u} \, .
\end{equation}

\subsection{Constraints on the limit dynamics}\label{ss:ctl1}

In this section, we establish some properties that the limit points of the family $\bigl(\vtheta_\veps\bigr)_\veps$ have to satisfy.
These are static relations, which do not characterise the limit dynamics yet.
%{\color{red} More elaboration on why we need periodic boundary conditions}

\begin{proposition} \label{nolimitdispthm}
Let $(\vtheta_\veps)_\veps$ be a family of weak solutions, related to initial data $(\vtheta_{0,\veps})_\veps$ verifying hypothesis of subsection \ref{sss:data-weak}. Let $\oline \vtheta$ be a limit point of the previous sequence as identified in Subsection \ref{ss:unif-est}. Then, one deduces the relation 
\begin{equation}\label{constraint-limit}
\d_1 \oline \vtheta=0 \qquad \qquad \text{a.e. in }\; \R_+\times \Omega\, . 
\end{equation} 
\end{proposition}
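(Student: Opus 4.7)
The plan is to extract the constraint by testing the weak momentum equation \eqref{weak_mom} against an arbitrary smooth, compactly-supported $\varphi$, then multiplying through by $\varepsilon$ and passing to the limit $\varepsilon\to 0$. After this rescaling, the Coriolis-like contribution $\int \mathcal{R}_1\vartheta_\varepsilon\,\varphi$ is the only term that survives at order $1$, while every other term carries a vanishing prefactor $\varepsilon$ in front of a quantity that is uniformly controlled by the energy estimates of Subsection~\ref{ss:unif-est}.

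Concretely, starting from \eqref{weak_mom} and multiplying by $\varepsilon$, I would rewrite the identity as
\begin{equation*}
\int_0^T\!\!\int_{\Omega} \mathcal{R}_1\vartheta_\varepsilon\,\varphi\,dx\,dt \;=\; \varepsilon\int_{\Omega}\vartheta_{0,\varepsilon}\,\varphi(0)\,dx \;+\; \varepsilon\int_0^T\!\!\int_{\Omega}\Bigl(\vartheta_\varepsilon\,\partial_t\varphi + \vartheta_\varepsilon\,\vec{u}_\varepsilon\cdot\nabla\varphi + \nu\,\Lambda^{1/2}\vartheta_\varepsilon\,\Lambda^{1/2}\varphi\Bigr)dx\,dt.
\end{equation*}
The initial-data, time-derivative and dissipation contributions on the right-hand side are uniformly bounded in $\varepsilon$ by \eqref{bound_theta_0}, \eqref{unif_bound_theta} and \eqref{unif_bound_lam_theta}; for the convective term, the bounds $\vartheta_\varepsilon,\vec{u}_\varepsilon\in L^\infty_T L^2$ imply $\vartheta_\varepsilon\vec{u}_\varepsilon\in L^\infty_T L^1$ uniformly, so that its pairing against $\nabla\varphi\in L^\infty$ remains bounded independently of $\varepsilon$. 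Hence the whole right-hand side vanishes in the limit.

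On the left-hand side, since $\vartheta_\varepsilon\weakstar\oline{\vartheta}$ in $L^\infty_T L^2$ and $\mathcal{R}_1$ is a bounded operator on $L^2$ (a zeroth-order Fourier multiplier), I would pass to the weak-$*$ limit to conclude that
$$\int_0^T\!\!\int_{\Omega}\mathcal{R}_1\oline{\vartheta}\,\varphi\,dx\,dt\;=\;0 \qquad \text{for every admissible }\varphi,$$
i.e.\ $\mathcal{R}_1\oline{\vartheta}=0$ as a distribution on $\mathbb{R}_+\times\Omega$. Reading this identity in Fourier variables $(n_1,\xi_2)\in\mathbb{Z}\times\mathbb{R}$ gives $\tfrac{i\,n_1}{|\tilde\xi|}\,\widehat{\oline{\vartheta}}(n_1,\xi_2)=0$, which forces $n_1\,\widehat{\oline{\vartheta}}(n_1,\xi_2)=0$ for a.e.\ $(n_1,\xi_2)$; this is exactly the claim $\partial_1\oline{\vartheta}=0$ of \eqref{constraint-limit}.

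I do not expect any serious obstacle: this is the standard static-constraint step of singular-perturbation analysis, and the anisotropic geometry $\mathbb{T}\times\mathbb{R}$ plays no role because $\mathcal{R}_1$ remains a uniformly bounded $L^2$-multiplier whose symbol vanishes precisely on $\{n_1=0\}$. The only point that deserves attention is making sure that no power of $\varepsilon$ is lost in the control of the nonlinear term $\vartheta_\varepsilon\vec{u}_\varepsilon$; the crude $L^2\times L^2$ estimate above suffices and no refined compactness is required at this stage (compactness will enter later, when one passes to the limit in the nonlinearity for the dynamical equation, not here).
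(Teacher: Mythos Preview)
Your proposal is correct and follows essentially the same route as the paper's own proof: multiply the weak formulation by $\varepsilon$, use the uniform energy bounds \eqref{bound_theta_0}, \eqref{unif_bound_theta}, \eqref{unif_bound_lam_theta} to kill every term except the Coriolis one, and then read off $\mathcal{R}_1\oline\vtheta=0$ in Fourier variables to obtain $\partial_1\oline\vtheta=0$. Your write-up is in fact slightly more explicit than the paper's (you spell out the $L^2\times L^2\to L^1$ bound for the convective term and the Fourier computation), but the argument is the same.
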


\begin{proof}
Let us consider the weak formulation of \eqref{eq_mom}. We test it against $\veps \vphi$ where $\vphi$ is a compactly supported test function in $C_c^\infty (\R_+\times \Omega)$. Denoting $[0,T]\times K=\Supp \vphi$ with $\vphi(T,\cdot)=0$ (and $T>0$), we have
\begin{equation}\label{eq:mom_constraint}
-\veps\int_0^T\int_K \left(\vtheta_\veps \d_t \vphi+\vtheta_\veps \ue \cdot \nabla \vphi+\nu \Lambda^{1/2}\vtheta_\veps \, \Lambda^{1/2}\vphi \right)\, \dxdt +\int_0^T\int_K\mc R_1 \vtheta_\veps \, \vphi \,  \dxdt=\veps \int_K\vtheta_{0,\veps}\vphi(0) \, \dx.
\end{equation}
By uniform bounds \eqref{unif_bound_theta} and \eqref{unif_bound_lam_theta}, the first three integrals on the left-hand side converge to 0, when tested against any smooth compactly supported $\vphi$. 

Analogously, thanks to the bound \eqref{bound_theta_0} on the initial data, also the term on the right-hand side of \eqref{eq:mom_constraint} vanishes. 

Then, passing to the limit for $\veps\rightarrow 0$, we find
\begin{equation*}
\int_0^T\int_K\mc R_1 \oline \vtheta \, \vphi \,  \dxdt=0\, ,
\end{equation*}   
for any test function $\vphi \in C_c^\infty (\R_+\times \Omega)$, which in particular implies
\begin{equation*}
\mc R_1 \oline \vtheta=0 \qquad \qquad \text{ a.e. in }\; \R_+\times \Omega\, .
\end{equation*}
At this point, employing the Fourier transform, one can deduce that 
\begin{equation}
\d_1 \oline \vtheta=0 \qquad \qquad \text{ a.e. in }\; \R_+\times \Omega\, .
\end{equation}
This completes the proof of the proposition. 
\qed
\end{proof}

\begin{remark}\label{periodic-need}
Observe that the proof above relies on the fact that the compact support of test functions does not force them to be 0 everywhere, unlike the case of the domain $\R^2$, where the condition that the test functions are only dependent on the vertical variable implies they must vanish inside the support as well.
\end{remark}

\section{Limit dynamics}\label{s:proof}

\subsection{Convergence to the fast rotation limit dynamics}\label{ss:fast rotation limit}
In this section, we will show the convergence of momentum equation \eqref{weak_mom} towards the linear equation \eqref{limit_mom} depicted in Theorem \ref{thm:fast_lim}.  

\medbreak
The uniform bounds of Subsection \ref{ss:unif-est} are
not enough for proving convergence in the weak formulation of the momentum equation: the main problem relies on identifying the weak limit of the convective term
$\div (\vtheta_\veps \ue)$. 
%Owing to the divergence-free condition $\div \ue=0$, we can write $\div (\vtheta_\veps \ue)$
%For this, we recall that 
%\begin{equation*}
%\vtheta_\veps \weak  \oline \vtheta \qquad \qquad \text{ and }\qquad \qquad \ue \weak \oline{\vec u}\quad \text{ in }\quad L^2_T(L^2)\, .
%\end{equation*}
%In addition, we have that $\div \ue=0$ and $\curl  \vtheta_\veps$ is precompact in $L^2_T(H^{-1})$. Then, employing the \textit{Div-Curl lemma} (see Lemma \ref{l:div-curl} in the Appendix), we can infer that 
%\begin{equation}\label{eq:lim_conv}
%\ue \cdot  \vtheta_\veps \rightarrow \oline{\vec u} \cdot  \oline \vtheta \quad \text{ in }\quad \mc D^\prime (\R_+\times \Omega).
%\end{equation}
%Now, from the constraint \eqref{constraint-limit}, we deduce that
%\begin{equation}\label{eq:lim_conv}
%\ue \cdot \nabla \vtheta_\veps \rightarrow 0 \quad \text{ in }\quad \mc D^\prime (\R_+\times \R^2).
%\end{equation}

%(see Paragraph \ref{limit_conv_term} below).
%we need to control the strong time oscillations of the solutions: this is the aim of Subsection ... . %, where we study propagation of acoustic waves.
%In Subsection ... , by using a \textit{compensated compactness} argument together with Aubin-Lions Lemma, we establish strong convergence of a suitable quantity.
%This property, which deeply relies on the structure of the wave system, allows us to pass to the limit in our equations (see Subsection ... . 
%\subsubsection{The convective term}
%\label{limit_conv_term}

At this point, we rewrite the convective term in the weak formulation 
\begin{equation}\label{rel_average}
-\int_0^T\int_\Omega \vtheta_\veps \ue \cdot \nabla \psi \, \dx\dt= -\int_0^T\int_{\T \times \R}\vtheta_\veps (u_\veps)_2\,  \d_2 \psi \, \dx \dt =-2\int_{\R}\langle \vtheta_\veps (u_\veps)_2\rangle\,  \d_2 \psi\, \dx \dt\, ,
\end{equation}
where the test-function $\psi$ is defined as
\begin{equation}\label{test-func}
\psi \in C^\infty_c([0,T[\, \times \Omega;\R) \quad \text{ such that }\quad \d_1\psi=0\, .
\end{equation}
%Notice that $\d_3\vec\psi=0$ and $\div\vec\psi=\divh\vec\psi^h=0$.
The previous relation \eqref{rel_average} pushes our attention to find compactness properties for $\langle \vtheta_\veps \rangle$.

Taking the mean in equation \eqref{eq_mom} and thanks to the fact that the mean of the Riesz term $\langle \mc R_1 \vtheta_\veps\rangle$ is zero, we get 
\begin{equation}\label{mean equation}
\d_t \langle \vtheta_\veps \rangle=-\div \langle \vtheta_\veps \ue \rangle - \nu \Lambda \langle \vtheta_\veps \rangle\, .
\end{equation}
To analyse the former term in the right-hand side of \eqref{mean equation}, we employ the paradifferential calculus (see Chapter 2 of \cite{B-C-D}). Using the Bony decomposition, one can find that the term $\vtheta_\veps \ue$ is in $L^2_T B^{-1/2}_{2,1}$ and thanks to the embedding $B^{-1/2}_{2,1}\hookrightarrow H^{-1/2}$, we argue that $\div \langle \vtheta_\veps \ue \rangle$ belongs to $L^2_T H^{-3/2}$. 

Then, recalling equation \eqref{mean equation}, we deduce that $(\d_t \langle \vtheta_\veps \rangle)_\veps$ is uniformly bounded in $L^2_T H^{-3/2}_{\rm loc}$, which implies $(\langle \vtheta_\veps \rangle )_\veps \subset W^{1,2}_T H^{-3/2}_{\rm loc}$. Moreover, we already know that $(\langle \vtheta_\veps \rangle)_\veps \subset L^\infty_T L^2\cap L^2_T \dot H^{1/2}$. Therefore, the Aubin-Lions lemma gives compactness of $(\langle \vtheta_\veps \rangle)_\veps$ in e.g. $L^2_T L^2_{\rm loc}$. Thus, we deduce the strong convergence (up to extraction) for $\veps \rightarrow 0$: 
$$ \langle \vtheta_\veps \rangle \rightarrow \langle \oline \vtheta \rangle \quad \text{ in }\quad L^2_T L^2_{\rm loc}\, . $$

This implies that 
\begin{equation}\label{eq:lim_conv}
\langle \vtheta_\veps \ue \rangle \rightarrow \langle\oline \vtheta\,  \oline{\vec u}\rangle \quad \quad \text{in}\quad \quad \mathcal{D}^\prime (\R_+ \times \Omega )\, .
\end{equation}

\subsubsection{Description of the limit system}
%REWRITE \eqref{weak:entr_limit} keeping in mind Remark \ref{r:F-G}.
With the convergence established in \eqref{eq:lim_conv}, we can pass to the limit in the equations.

To begin with, we take a test-function $\psi$ (for $T>0$) as in \eqref{test-func}.
% follows (for $T>0$)
%\begin{equation}\label{test-func}
%\psi \in C^\infty_c([0,T[\, \times \Omega;\R) \quad \text{ such that }\quad \d_1\psi=0\, .
%\end{equation}
%Notice that $\d_3\vec\psi=0$ and $\div\vec\psi=\divh\vec\psi^h=0$.
For such a $\psi$, the Riesz term $\mc R_1 \vtheta_\veps$ vanishes identically. Hence, the momentum equation in its weak formulation reads: 
\begin{equation}\label{weak_mom-1}
-\int_0^T\int_{\Omega}\left(\vtheta_\veps \, \d_t\psi + \vtheta_\veps \ue\cdot \nabla \psi+\nu \Lambda^{1/2}\vtheta_\veps\,  \Lambda^{1/2}\psi\right)\, \dx\dt=\int_{\Omega}\theta_{0,\veps}\, \psi(0)\, \dx\, .
\end{equation}
Making use of the uniform bounds of Subsection \ref{ss:unif-est}, we can pass to the limit in the $\d_t$ term and in the viscosity term. Moreover, our assumptions imply that $\vtheta_{0,\veps}\rightharpoonup \oline \vtheta_0$ in e. g. $L^2_{\rm loc}$. Thanks to the convergence identified in \eqref{eq:lim_conv} for the convective term, letting $\veps\rightarrow 0$, we may infer that
  \begin{equation}\label{lim_weak_mom}
-\int_0^T\int_{\Omega}\left(\oline \vtheta\, \d_t\psi +\nu \Lambda^{1/2}\oline \vtheta\,  \Lambda^{1/2}\psi\right)\, \dx\dt=\int_{\Omega}\oline \theta_{0}\, \psi(0)\, \dx\, ,
\end{equation}
where we have also employed the constraint \eqref{constraint-limit} and the special structure of the test functions \eqref{test-func}. This concludes the proof of Theorem \ref{thm:fast_lim}.
\subsection{Proof of the convergence in the inviscid and fast rotation case}\label{s:fast-inviscid_limit}

In this subsection, we consider the case when $\nu=\nu(\veps)=\veps^{\alpha},\, \alpha > 0$. In that scaling, the energy estimate reads
\begin{equation}\label{energy_ineq-1}
\|\vtheta_\veps(T)\|^2_{L^2}+2\veps^{\alpha} \int_0^T\|\Lambda^{1/2}\vtheta_\veps (\tau)\|^2_{L^2}\,\dtau \leq \|\vtheta_{0,\veps}\|^2_{L^2}\quad \text{ for all }\quad T>0\, .
\end{equation}
Therefore, one completely loses the uniform control on $\Lambda^{1/2}\vtheta_\veps$. In order to recover a similar bound to \eqref{unif_bound_lam_theta}, we have to consider strong solutions with the following regularity properties:
\begin{equation*}
\vtheta_\veps \in L^\infty ([0,T[\, ;H^s(\Omega))\quad \text{and}\quad s>2\, .
%\cap L^2([0,T[\, ;\dot{H}^{3/2}(\R^2))\, .
\end{equation*}
%It is worth to notice that this condition guarantees the preservation of the initial $H^s$ regularity (see e.g. Chapter 3 of \cite{F-N}).
%We take \eqref{eq_mom} multiplied by $\Lambda^{1/2}$, i.e.
%\begin{equation}\label{strong_eq_mom}
%\partial_t \Lambda^{1/2}\vtheta_\veps + \Lambda^{1/2}\left(\ue \cdot \nabla \vtheta_\veps \right)+ \veps^{\alpha},\, \alpha < -2/3 \Lambda^{3/2} \vtheta_\veps+\frac{1}{\veps}\Lambda^{1/2}\mc R_1 \vtheta_\veps=0
%\end{equation} 
%and we test equation \eqref{strong_eq_mom} against $\Lambda^{1/2}\vtheta_\veps$. We integrate by parts, deriving the following estimate:

The aim now is to show higher order estimates for the temperature. We have already presented in \eqref{energy_ineq-1} the $L^2$ estimate. In order to obtain the $H^s$ control, we employ the Littlewood-Paley decomposition (see the Appendix \ref{app:LP}). Applying the dyadic blocks $\Delta_j$ to equation \eqref{eq_mom} (see e.g. \cite{S}), we obtain
\begin{equation*}
\d_t \Delta_j \vtheta_\veps +\ue \cdot \nabla \Delta_j \vtheta_\veps +\veps^\alpha \Lambda\Delta_j \vtheta_\veps+\frac{1}{\veps}\mc R_1
\Delta_j \vtheta_\veps=[\ue \cdot \nabla, \Delta_j ]\vtheta_\veps\, .
%+\veps^\alpha [\Lambda, \Delta_j]\vtheta_\veps+\frac{1}%{\veps}[\mc R_1,\Delta_j]\vtheta_\veps
\end{equation*}
%Due to the transport estimates and the orthogonality of the Riesz transform, it follows that 
%\begin{equation}\label{eq_energy_H^s}
%\sup_{0\leq t\leq T}\|\vtheta_\veps (t) \|_{H^s}+\veps^\alpha \int_0^T\| \vtheta_\veps (t) \|_{H^{s+1}}\dt \, \leq \|\vtheta_{0,\veps} \|_{H^s}\exp \left(C\int_0^T \|\ue (t)\|_{H^s}\dt\right)\, ,
%\end{equation}
%for $T>0$ and $s>2$. 

Multiplying by $\Delta_j \vtheta_\veps$ and using the orthogonality of the Riesz term, it follows that 
\begin{equation*}
\d_t \| \Delta_j \vtheta_\veps \|_{L^2}^2+2\veps^\alpha \|\Lambda^{1/2}\Delta_j \vtheta_\veps \|_{L^2}^2\leq 2\|[\ue \cdot \nabla , \Delta_j]\vtheta_\veps \|_{L^2}\|\Delta_j \vtheta_\veps \|_{L^2}\, .
\end{equation*}
In particular, we get 
\begin{equation*}
\|\Delta_j \vtheta_\veps \|_{L^2}\leq \|\Delta_j \vtheta_{0,\veps}\|_{L^2}+C \int_0^t\|[\ue \cdot \nabla , \Delta_j]\vtheta_\veps \|_{L^2} \dtau \, .
\end{equation*}
At this point, thanks to the commutator estimates (we refer e.g. to \cite{S}) we have 
$$ \|[\ue \cdot \nabla, \Delta_j]\vtheta_\veps\|_{L^2}\leq C c_j(t)\left(\|\vtheta_\veps\|_{L^\infty}\|\ue \|_{H^s}+\|\vtheta_\veps\|_{H^s}\|\ue \|_{L^\infty}\right)\leq C c_j(t)\|\vtheta_\veps\|_{L^2}^2 \, , $$
where $(c_j(t))_{j\geq -1}$ is a sequence in the unit ball of $\ell^2$. 

After summing on indices $j\geq -1$, we finally derive for all $t\geq 0$:
\begin{equation}\label{eq_energy_H^s}
\|\vtheta_\veps (t)\|_{H^s}\leq \|\vtheta_{0,\veps}\|_{H^s}+C\int_0^t \|\vtheta_\veps (\tau )\|_{H^s}^2
\dtau \, .
\end{equation}
The scope now is finding a time $T^\ast>0$ for which the solutions are uniformly bounded (in $\veps$) in the interval $[0,T^\ast]$.

We define $T^\ast_\veps >0$ such that 
\begin{equation}\label{time_veps}
T^\ast_\veps:=\sup \left\{t>0: \int_0^t \|\vtheta_\veps\|_{H^s}^2\leq \|\vtheta_{0,\veps}\|_{H^s}\right\}\, .
\end{equation}
Then, we deduce $\|\vtheta_\veps (t)\|_{H^s}\leq C\|\vtheta_{0,\veps}\|_{H^s}$ for all times $t\in [0, T_\veps^\ast]$ and for some positive constant $C=C(s)$. Therefore, for all $t\in [0,T^\ast_\veps]$ we gather
$$ \int_0^t \|\vtheta_\veps (\tau )\|_{H^s}^2 \dtau \leq Ct\|\vtheta_{0,\veps}\|_{H^s}^2 \, .$$
By using the definition \eqref{time_veps} of $T^\ast_\veps$, we finally argue that
$$ T_\veps^\ast\geq \frac{C}{\|\vtheta_{0,\veps}\|_{H^s}}\, . $$
In particular, this implies that there exists a time $T^\ast >0$ such that
$$ T^\ast:= \sup_{\veps >0}T^\ast_\veps >0\, . $$

From the definition of $T^\ast >0$ and relation \eqref{eq_energy_H^s}, one can obtain that 
\begin{equation}
\sup_{\veps \in ]0,1]}\|\vtheta_\veps \|_{L^\infty_{T^{\ast}} (H^s)}\leq C\, ,
\end{equation}
and this allows to control also the dissipative term in the relation \eqref{energy_ineq-1}.

At this point, since those strong solutions are in particular weak solutions, one can adapt the arguments developed in subsections \ref{ss:unif-est} and \ref{ss:fast rotation limit} to conclude. Indeed, letting $\veps\rightarrow 0$, one has  
\begin{equation}\label{lim_weak_mom-1}
-\int_0^{T^{\ast}}\int_{\Omega}\oline \vtheta\, \d_t\psi\, \dx\dt=\int_{\Omega}\oline \theta_{0}\, \psi(0)\, \dx\, ,
\end{equation}
for all test functions $\psi$ defined as in \eqref{test-func}. This proves the limit dynamics in Theorem \ref{thm:inviscid dynamics}.

%\begin{remark}
%The general case, where one considers $\Lambda^{2\alpha}$ with $\alpha \in \, ]0,1[$ and $\nu= \nu (\veps)=\veps^\beta$ with $\beta >0$, can be easily treated adapting the previous analysis with no essential troubles. In particular, in the fast rotation regime, one has to work with functions $\vtheta_\veps \in L^\infty ([0,T[\, ;L^2(\R^2))\cap L^2([0,T[\, ;\dot{H}^{\alpha}(\R^2))$. 
%\end{remark}

%%%%%%%%%%%%%%%%%%%%%%%%%%%%%%%%%%%%%%%%%%%%%%%%%%%%%%%%%%%%%%%%%%%%%%%%%%%%%%%%%%%%%%%%%%%%
%%%%%%%%%%%%%%%%%%%%%%%%%%%%%%%%%%%%%%%%%%%%%%%%%%%%%%%%%%%%%%%%%%%%%%%%%%%%%%%%%%%%%%%%%%%%
\newpage
\appendix

\section{Appendix -- Littlewood-Paley theory} \label{app:LP}

In this appendix, we present some tools from Littlewood-Paley theory, which we have exploited in our analysis.
We refer e.g. to Chapter 2 of \cite{B-C-D} for details.
For simplicity of exposition, we deal with the $\R^d$ case, with $d\geq1$; however, the whole construction can be adapted also to the $d$-dimensional torus $\TT^d$, and to the ``hybrid'' case
$\R^{d_1}\times\TT^{d_2}$.

First of all, we introduce the \emph{Littlewood-Paley decomposition}. For this,
%based on a non-homogeneous dyadic partition of unity with respect to the Fourier variable. We
we fix a smooth radial function $\chi$ such that $\Supp\chi\subset B(0,2)$, $\chi\equiv 1$ in a neighborhood of $B(0,1)$
and the map $r\mapsto\chi(r\,e)$ is non-increasing over $\R_+$ for all unitary vectors $e\in\R^d$.
Set $\varphi\left(\xi\right)=\chi\left(\xi\right)-\chi\left(2\xi\right)$ and $\vphi_j(\xi):=\vphi(2^{-j}\xi)$ for all $j\geq0$.
The dyadic blocks $(\Delta_j)_{j\in\Z}$ are defined by\footnote{We agree  that  $f(D)$ stands for 
the pseudo-differential operator $u\mapsto\mc{F}^{-1}[f(\xi)\,\what u(\xi)]$.} 
$$
\Delta_j\,:=\,0\quad\mbox{ if }\; j\leq-2,\qquad\Delta_{-1}\,:=\,\chi(D)\qquad\mbox{ and }\qquad
\Delta_j\,:=\,\varphi(2^{-j}D)\quad \mbox{ if }\;  j\geq0\,.
$$
For any $j\geq0$ fixed, we  also introduce the \emph{low frequency cut-off operator}
\begin{equation} \label{eq:S_j}
S_j\,:=\,\chi(2^{-j}D)\,=\,\sum_{k\leq j-1}\Delta_{k}\,.
\end{equation}
Note that $S_j$ is a convolution operator. More precisely, after defining
$$
K_0\,:=\,\mc F^{-1}\chi\qquad\qquad\mbox{ and }\qquad\qquad K_j(x)\,:=\,\mathcal{F}^{-1}[\chi (2^{-j}\cdot)] (x) = 2^{jd}K_0(2^j x)\,,
$$
for all $j\in\N$ and all tempered distributions $u\in\mc S'$, we have that $S_ju\,=\,K_j\,*\,u$.
%\begin{equation} \label{eq:S_conv}
%S_ju\,=\,K_j\,*\,u\,.
%\end{equation}
Thus, the $L^1$ norm of $K_j$ is independent of $j\geq0$, hence $S_j$ maps continuously $L^p$ into itself, for any $1 \leq p \leq +\infty$.
%Notice also that $\mc F(K_0)=\chi$, therefore
%\begin{equation} \label{eq:int-K_0}
%\int_{\R^d}K_0(x)\,dx\,=\,\chi(0)\,=\,1\,.
%\end{equation}

The following property holds true: for any $u\in\mc{S}'$, then one has the equality $u=\sum_{j}\Delta_ju$ in the sense of $\mc{S}'$.
Let us also recall the so-called \emph{Bernstein inequalities}.
  \begin{lemma} \label{l:bern}
Let  $0<r<R$.   A constant $C$ exists so that, for any non-negative integer $k$, any couple $(p,q)$ 
in $[1,+\infty]^2$, with  $p\leq q$,  and any function $u\in L^p$,  we  have, for all $\lambda>0$,
$$
\displaylines{
{\Supp}\, \widehat u \subset   B(0,\lambda R)\quad
\Longrightarrow\quad
\|\nabla^k u\|_{L^q}\, \leq\,
 C^{k+1}\,\lambda^{k+d\left(\frac{1}{p}-\frac{1}{q}\right)}\,\|u\|_{L^p}\;;\cr
{\Supp}\, \widehat u \subset \{\xi\in\R^d\,:\, \lambda r\leq|\xi|\leq \lambda R\}
\quad\Longrightarrow\quad C^{-k-1}\,\lambda^k\|u\|_{L^p}\,
\leq\,
\|\nabla^k u\|_{L^p}\,
\leq\,
C^{k+1} \, \lambda^k\|u\|_{L^p}\,.
}$$
\end{lemma}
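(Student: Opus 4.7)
The plan is to reduce the proof to the case $\lambda = 1$ by a dilation and then apply Young's convolution inequality to carefully chosen smooth Fourier multipliers. For the scaling step, if $u_\lambda(x) := u(x/\lambda)$, then $\what{u_\lambda}(\xi) = \lambda^d\,\what u(\lambda \xi)$, so the Fourier support of $u_\lambda$ is $\lambda$ times that of $u$; moreover $\|u_\lambda\|_{L^p} = \lambda^{d/p}\|u\|_{L^p}$ and $\|\nabla^k u_\lambda\|_{L^q} = \lambda^{-k + d/q}\|\nabla^k u\|_{L^q}$. Collecting the powers of $\lambda$ reduces both implications to the case $\lambda = 1$ and produces automatically the factor $\lambda^{k + d(1/p - 1/q)}$ in the first inequality and $\lambda^k$ in the second.

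For the ball case, I would fix an auxiliary cutoff $\phi \in C_c^\infty(\R^d)$ with $\phi \equiv 1$ on $B(0,R)$ and $\Supp \phi \subset B(0, 2R)$. Since $\what u$ is supported in $B(0,R)$, one has $\what u = \phi\,\what u$, i.e.\ $u = h * u$ with $h := \mc F^{-1}\phi$. Differentiating $k$ times under the convolution gives $\nabla^k u = (\nabla^k h) * u$, and Young's inequality applied with the exponent $r$ defined by $1/r = 1 + 1/q - 1/p$ (admissible since $p \leq q$) yields $\|\nabla^k u\|_{L^q} \leq \|\nabla^k h\|_{L^r}\|u\|_{L^p}$. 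The key bookkeeping step is to check that $\|\nabla^k h\|_{L^r}$ grows only like $C^{k+1}$: for each multi-index $|\alpha| = k$, I write $\d^\alpha h = \mc F^{-1}[(i\xi)^\alpha \phi(\xi)]$, so that the bound $|(i\xi)^\alpha \phi(\xi)| \leq (2R)^k$ combined with weighted $L^1$ estimates on $(1+|x|)^N \mc F^{-1}[(i\xi)^\alpha \phi]$ (obtained by integrating by parts against a finite number of derivatives of $\phi$, whose Schwartz seminorms are fixed) gives the required geometric growth. Summing over the at-most $C^k$ multi-indices of order $k$ and combining with the scaling reduction completes the first claim.

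For the annulus case, the upper bound is already covered as the particular case $p = q$ of the first inequality. For the lower bound, I would pick $\tilde\psi \in C_c^\infty(\R^d \setminus \{0\})$ equal to $1$ on $\{r \leq |\xi| \leq R\}$, so that $u = \tilde\psi(D) u$. Setting, for each $|\alpha| = k$,
\[
m_\alpha(\xi) := \frac{\oline{(i\xi)^\alpha}}{|\xi|^{2k}}\,\tilde\psi(\xi)\,,
\]
which is well defined and smooth because $\tilde\psi$ vanishes in a neighbourhood of the origin, a direct computation in Fourier variables shows $u = \sum_{|\alpha|=k} m_\alpha(D)\,\d^\alpha u$. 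Young's convolution inequality applied term by term then yields $\|u\|_{L^p} \leq \bigl(\sum_{|\alpha|=k} \|\mc F^{-1} m_\alpha\|_{L^1}\bigr)\|\nabla^k u\|_{L^p}$, and the $L^1$ norms of the kernels $\mc F^{-1} m_\alpha$ are controlled by $O(C^{k+1})$ through the same Schwartz-type estimates as in the ball case.

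The main technical obstacle I anticipate is precisely the constant-tracking step: showing that the $L^r$ (resp.\ $L^1$) norm of $\mc F^{-1}[(i\xi)^\alpha \phi]$ (resp.\ of $\mc F^{-1} m_\alpha$) grows only as $C^{k+1}$ rather than faster in $k$. This rests on performing a fixed, $k$-independent number of integrations by parts to produce decay at infinity and on the elementary bound on the cardinality of multi-indices of length $k$ in $\R^d$, which contributes at most a polynomial-in-$k$ factor that is absorbed into $C^{k+1}$.
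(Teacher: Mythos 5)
Your proposal follows the classical route --- essentially the proof in Chapter 2 of Bahouri--Chemin--Danchin, which is precisely the reference this paper defers to: the paper itself only recalls the lemma in its appendix and gives no proof. The scaling reduction to $\lambda=1$, the Fourier cutoff $\phi$ with $u = (\mc F^{-1}\phi) * u$ plus Young's inequality for the ball case, the division by $|\xi|^{2k}$ on a neighbourhood of the annulus for the reverse inequality, and the insistence that the constants be tracked through a fixed, $k$-independent number of integrations by parts (with polynomial-in-$k$ factors absorbed into $C^{k+1}$) are exactly the standard ingredients, and your identification of the constant-tracking step as the crux is correct.

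One step, however, is wrong as written. In the annulus case you claim that, with $m_\alpha(\xi) = \oline{(i\xi)^\alpha}\,|\xi|^{-2k}\,\tilde\psi(\xi)$, a direct computation gives $u = \sum_{|\alpha|=k} m_\alpha(D)\,\d^\alpha u$. This identity would require $\sum_{|\alpha|=k}\xi^{2\alpha} = |\xi|^{2k}$, which is false in dimension $d\ge 2$: for $d=k=2$ one has $\xi_1^4+\xi_1^2\xi_2^2+\xi_2^4 \neq (\xi_1^2+\xi_2^2)^2$. By the multinomial theorem, $|\xi|^{2k} = \sum_{|\alpha|=k}\frac{k!}{\alpha!}\,\xi^{2\alpha}$, so the correct multipliers are $m_\alpha(\xi) = \frac{k!}{\alpha!}\,\oline{(i\xi)^\alpha}\,|\xi|^{-2k}\,\tilde\psi(\xi)$. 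The repair is harmless for your bookkeeping: since $\sum_{|\alpha|=k} k!/\alpha! = d^k$, the extra coefficients contribute only a geometric factor in $k$, absorbed into $C^{k+1}$ exactly like the cardinality factor you already account for. A second, purely cosmetic slip: with $u_\lambda(x):=u(x/\lambda)$, the Fourier support of $u_\lambda$ is $\lambda^{-1}$ times (not $\lambda$ times) that of $u$; your subsequent power counting ($\|u_\lambda\|_{L^p}=\lambda^{d/p}\|u\|_{L^p}$, $\|\nabla^k u_\lambda\|_{L^q}=\lambda^{-k+d/q}\|\nabla^k u\|_{L^q}$) is nevertheless correct, so the reduction to $\lambda=1$ goes through as stated.
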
   

By use of Littlewood-Paley decomposition, we can define the class of Besov spaces.
\begin{definition} \label{d:B}
  Let $s\in\R$ and $1\leq p,r\leq+\infty$. The \emph{non-homogeneous Besov space}
$B^{s}_{p,r}$ is defined as the subset of tempered distributions $u$ for which
$$
\|u\|_{B^{s}_{p,r}}\,:=\,
\left\|\left(2^{js}\,\|\Delta_ju\|_{L^p}\right)_{j\geq -1}\right\|_{\ell^r}\,<\,+\infty\,.
$$
\end{definition}
Besov spaces are interpolation spaces between Sobolev spaces. In fact, for any $k\in\N$ and~$p\in[1,+\infty]$
we have the chain of continuous embeddings $ B^k_{p,1}\hookrightarrow W^{k,p}\hookrightarrow B^k_{p,\infty}$,
%where  $W^{k,p}$ denotes the classical Sobolev space of $L^p$ functions with all the derivatives up to the order $k$ in $L^p$.
which, when $1<p<+\infty$, can be refined to
$B^k_{p, \min (p, 2)}\hookrightarrow W^{k,p}\hookrightarrow B^k_{p, \max(p, 2)}$.
In particular, for all $s\in\R$ we deduce that $B^s_{2,2}\equiv H^s$, with equivalence of norms:
\begin{equation} \label{eq:LP-Sob}
\|f\|_{H^s}\,\sim\,\left(\sum_{j\geq-1}2^{2 j s}\,\|\Delta_jf\|^2_{L^2}\right)^{\!\!1/2}\,.
\end{equation}

As an immediate consequence of the first Bernstein inequality, one gets the following embedding result, which generalises Sobolev embeddings.
\begin{proposition}\label{p:embed}
The space $B^{s_1}_{p_1,r_1}$ is continuously embedded in the space $B^{s_2}_{p_2,r_2}$ for all indices satisfying $p_1\,\leq\,p_2$ and either
$s_2\,<\,s_1-d\big(1/p_1-1/p_2\big)$, or $s_2\,=\,s_1-d\big(1/p_1-1/p_2\big)$ and $r_1\leq r_2$.
\end{proposition}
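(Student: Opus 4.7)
The plan is to reduce the embedding to an estimate on each dyadic block via Bernstein's inequality, and then to handle the summation in $j$ by an $\ell^{r_1}\hookrightarrow\ell^{r_2}$-type argument. The key observation is that for $j\geq 0$ the support of $\widehat{\Delta_j u}$ lies in an annulus of size $\sim 2^j$, while for $j=-1$ it lies in a fixed ball; in either case the first (ball-type) Bernstein inequality of Lemma \ref{l:bern} applies.

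First, I would apply Bernstein's inequality, with $p=p_1$ and $q=p_2$, to each dyadic block $\Delta_j u$. Since $\Supp\widehat{\Delta_j u}\subset B(0,C\,2^j)$ for some fixed constant, the inequality produces
$$
\|\Delta_j u\|_{L^{p_2}}\,\leq\,C\,2^{jd(1/p_1-1/p_2)}\,\|\Delta_j u\|_{L^{p_1}}\,,
$$
uniformly in $j\geq -1$ (with the convention that $2^{-d(1/p_1-1/p_2)}$ is harmless as a constant when $j=-1$, since $p_1\leq p_2$). Multiplying by $2^{js_2}$ and rearranging yields
$$
2^{js_2}\|\Delta_j u\|_{L^{p_2}}\,\leq\,C\,2^{j(s_2-s_1+d(1/p_1-1/p_2))}\,\bigl(2^{js_1}\|\Delta_j u\|_{L^{p_1}}\bigr)\,.
$$

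Second, I would split into the two cases of the proposition. In the critical case $s_2=s_1-d(1/p_1-1/p_2)$, the exponential factor equals $1$, so taking the $\ell^{r_2}$-norm in $j$ and using the continuous embedding $\ell^{r_1}\hookrightarrow \ell^{r_2}$ (valid because $r_1\leq r_2$) gives $\|u\|_{B^{s_2}_{p_2,r_2}}\leq C\,\|u\|_{B^{s_1}_{p_1,r_1}}$. In the subcritical case $s_2<s_1-d(1/p_1-1/p_2)$, denote $\delta:=s_1-d(1/p_1-1/p_2)-s_2>0$; then the above inequality reads
$$
2^{js_2}\|\Delta_j u\|_{L^{p_2}}\,\leq\,C\,2^{-j\delta}\,\bigl(2^{js_1}\|\Delta_j u\|_{L^{p_1}}\bigr)\,,
$$
and I would conclude by taking the $\ell^{r_2}$-norm, using H\"older's inequality on sequences together with the fact that $(2^{-j\delta})_{j\geq -1}\in\ell^{q}$ for any $1\leq q\leq\infty$, to dominate the right-hand side by $C\,\|u\|_{B^{s_1}_{p_1,r_1}}$ regardless of the relative values of $r_1$ and $r_2$.

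I do not anticipate a genuine obstacle here, since the statement is presented in the paper as an immediate consequence of Bernstein's inequality. The only mildly delicate point is the treatment of the block $\Delta_{-1}$, which has Fourier support in a fixed ball rather than a dyadic annulus; this is handled simply by regarding it as a single term that Bernstein still controls (the hypothesis $p_1\leq p_2$ being exactly what is needed there). Everything else is a direct chain of inequalities, and the two cases correspond precisely to whether one has absolute summability of the geometric factor or must rely on the $\ell^{r_1}\hookrightarrow\ell^{r_2}$ inclusion.
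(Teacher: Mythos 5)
Your proof is correct and takes exactly the route the paper indicates: Proposition~\ref{p:embed} is stated there as an immediate consequence of the first (ball-type) Bernstein inequality of Lemma~\ref{l:bern}, which is precisely your blockwise estimate $\|\Delta_j u\|_{L^{p_2}}\leq C\,2^{jd(1/p_1-1/p_2)}\|\Delta_j u\|_{L^{p_1}}$, followed by the $\ell^{r_1}\hookrightarrow\ell^{r_2}$ inclusion in the critical case and the summability of the geometric factor $2^{-j\delta}$ in the subcritical case. Your treatment of the low-frequency block $\Delta_{-1}$ and of both orderings of $r_1$ and $r_2$ via H\"older's inequality on sequences is also sound, so there is nothing to correct.
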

In particular, we get the following chain of continuous embeddings:
$$ B^s_{p,r}\hookrightarrow W^{1,\infty} \, , $$
whenever the triplet $(s,p,r)\in \R\times [1,+\infty]^2$ satisfies 
\begin{equation}\label{cond_algebra}
s>1+\frac{d}{p} \quad \quad \quad \text{or}\quad \quad \quad s=1+\frac{d}{p} \quad \text{and}\quad r=1\, .
\end{equation}

%{\color{red} TO BE REMOVED
%We conclude this Appendix, recalling the \textit{Div-Curl lemma} in its $L^p$ version (see Chapter 10 of \cite{F-N}). 
%\begin{lemma}\label{l:div-curl}
%Let $Q\subset \R^d$ be an open set, $1<p<\infty$. Assume that
%\begin{equation*}
%\begin{split}
%f_\veps \rightharpoonup f \quad &\text{ weakly in }\quad L^p(Q;\R^d)\, ,\\
%g_\veps \rightharpoonup g \quad &\text{ weakly in }\quad L^q(Q;\R^d)\, ,
%\end{split}
%\end{equation*}
%where $1/p+1/q=1$. In addition, let 
%\begin{equation*}
%\begin{split}
%\div f_\veps \quad &\text{ be precompact in }\quad W^{-1,p}(Q)\, ,\\
%\curl g_\veps \quad &\text{ be precompact in }\quad W^{-1,q}(Q;\R^{d\times d})\, .
%\end{split}
%\end{equation*}
%Then,
%\begin{equation*}
%f_\veps \cdot g_\veps \rightarrow f\cdot g \quad \text{ in %}\quad \mc D^\prime (Q).
%\end{equation*}
%\end{lemma}}
%%%%%%%%%%%%%%%%%%%%%%%%%%%%%%%%%%%%%%%%%%%%%%%%%%%%%%%%%%%%%%%%%%%%%%%%%%%%%%%%%%%%%%%%%%%%
%%%%%%%%%%%%%%%%%%%%%%%%%%%%%%%%%%%%%%%%%%%%%%%%%%%%%%%%%%%%%%%%%%%%%%%%%%%%%%%%%%%%%%%%%%%%
\newpage

{\small

}

\end{document}